\documentclass[11pt]{article}

\usepackage[T1]{fontenc}
\usepackage[letterpaper,margin=1.05in]{geometry}
\usepackage{amsmath,amssymb,amsthm}
\usepackage{microtype}
\usepackage[hidelinks]{hyperref}

\hypersetup{
  pdftitle={Uniform Testability Implies Asymmetric Testability},
  pdfauthor={Senhan Yao},
  pdfsubject={Hypothesis testing under filtrations},
  pdfkeywords={uniform consistency, asymmetric consistency, stationary ergodic processes, sequential decisions}
}
\newtheorem{theorem}{Theorem}[section]
\newtheorem{corollary}[theorem]{Corollary}
\theoremstyle{definition}
\newtheorem{definition}[theorem]{Definition}
\theoremstyle{remark}
\newtheorem{remark}[theorem]{Remark}

\newcommand{\N}{\mathbb{N}}
\newcommand{\E}{\mathcal{E}}
\newcommand{\F}{\mathcal{F}}

\title{Uniform Testability Implies Asymmetric Testability}
\author{Senhan Yao}
\date{August 8, 2026}

\begin{document}

\maketitle

\begin{abstract}
We prove a filtration-level conversion from uniform fixed-time testing to
all-sample one-sided error control and uniform future-tail control. Given an
adapted deterministic binary test whose worst-case fixed-time error tends to
zero, the construction selects update times with prescribed summable error
budgets and holds each selected decision until the next update. For every
prescribed significance level, the construction yields a test that controls
Type~I error at every sample size; from each update time onward, the probability
of any subsequent error is bounded uniformly over each hypothesis class by the
remaining budget. Hence the test is uniformly consistent and eventually correct
almost surely under both hypotheses. Applying the conversion to finite-alphabet
coordinate processes proves the first implication in Ryabko's Conjecture~5.2.
\end{abstract}

\section{Introduction}

Ryabko conjectured the chain
\[
  \text{uniform consistency}
  \quad\Longrightarrow\quad
  \text{asymmetric consistency}
  \quad\Longrightarrow\quad
  \text{asymptotic consistency}
\]
for tests of stationary ergodic processes
\cite[Conjecture~5.2, p.~71]{Ryabko2019}. This note establishes the first
implication; it does not address the second.

Related conversions are available under additional structure. For binary tests
under i.i.d.\ product sampling, Pfanzagl established that a uniformly weakly
consistent test exists if and only if a uniformly strongly consistent test
exists \cite{Pfanzagl1968}. For i.i.d.\ observations on a separable metric space,
Boeken et al.\ give a modern formulation and related equivalences among
uniform weak, future-tail (``strong''), and summable error control, and among
asymptotic and all-sample one-sided level guarantees
\cite[Definition~3 and Theorem~2, pp.~6--8]{BoekenEtAl2026}.
In Ermakov's terminology, distinguishability yields exponential worst-case
fixed-time errors and, by a union bound, exponential control of any future
error
\cite[equation~(2.6), Theorem~3.4, and equations~(3.2)--(3.3)]{Ermakov2017}.
For dependent data, Nobel uses a related update-and-hold construction whose
update rules are obtained from topological separation and uniform ergodic
estimates; summable bounds then imply almost-sure discernibility
\cite[Appendix~8.2, pp.~267--268]{Nobel2006}.

The contribution here is modest but structural. We make no priority claim
for the subsequence-and-summability or update-and-hold devices themselves;
related forms appear in the work cited above. What is isolated here is a
filtration-level conversion requiring only uniform fixed-time consistency.
Starting from any uniformly consistent adapted deterministic binary test,
uniform convergence itself supplies deterministic update times at which the
worst-case errors meet any prescribed summable budget. Holding each selected
decision until the next update then gives uniform future-tail control, while
initializing the held test at zero gives the all-sample one-sided level
constraint required for asymmetric consistency. Consequently, the
uniform-to-asymmetric implication in Conjecture~5.2 follows without
independence, product structure, topology, ergodic estimates, or
computability. The update times are existential and may be noncomputable,
consistently with the existence-based definitions considered here.

Section~2 gives the abstract framework, Section~3 proves the conversion, and
Section~4 applies it to finite-alphabet stationary ergodic processes.

\section{Framework and definitions}

Write \(\N:=\{1,2,\ldots\}\). Let
\((\Omega,\F_\infty)\) be a measurable space, and let
\((\F_n)_{n\ge1}\) be a filtration of sub-\(\sigma\)-fields of
\(\F_\infty\):
\[
  \F_1\subseteq\F_2\subseteq\cdots\subseteq\F_\infty.
\]
Let \(\mathcal P(\Omega,\F_\infty)\) denote the set of probability measures
on \((\Omega,\F_\infty)\), and let
\[
  H_0,H_1\subseteq\mathcal P(\Omega,\F_\infty).
\]
Either hypothesis class may be empty, and no behavior is prescribed
outside \(H_0\cup H_1\). We do not require
\(\F_\infty=\sigma(\bigcup_{n\ge1}\F_n)\).

Equip \(\{0,1\}\) with its discrete \(\sigma\)-field. A deterministic binary
test is a sequence
\[
  \eta=(\eta_n)_{n\ge1},
  \qquad
  \eta_n:\Omega\to\{0,1\},
\]
where \(\eta_n\) is \(\F_n\)-measurable. Equivalently, the sequence
\(\eta\) is \emph{adapted} to the filtration \((\F_n)_{n\ge1}\). Output \(i\)
is interpreted as selecting \(H_i\). Under \(\rho\in H_0\), the event
\(\{\eta_n=1\}\) is a
Type~I error; under \(\rho\in H_1\), the event \(\{\eta_n=0\}\) is a
Type~II error.

For \(i\in\{0,1\}\), write
\[
  \{\eta_n=i\text{ eventually}\}
  :=
  \bigcup_{N=1}^{\infty}
  \bigcap_{n=N}^{\infty}
  \{\eta_n=i\}.
\]
This event belongs to \(\F_\infty\), and, viewing
\(\{0,1\}\subset\mathbb{R}\), \(\eta_n\to i\) is equivalent to
\(\eta_n=i\) eventually.

\begin{definition}[Uniform consistency]\label{def:uniform}
A test \(\eta\) is \emph{uniformly consistent} for \(H_0\) against \(H_1\)
if, for every \(\varepsilon>0\), there is an \(N\in\N\) such that, for every
\(n\ge N\),
\[
  \rho(\eta_n\ne i)<\varepsilon
  \qquad
  (i\in\{0,1\},\ \rho\in H_i).
\]
The pair \((H_0,H_1)\) is \emph{uniformly testable} if such a test exists.
\end{definition}

\begin{remark}[Overlap of the hypotheses]\label{rem:overlap}
No disjointness assumption on \(H_0\) and \(H_1\) is needed in the
definitions. If a uniformly consistent test \(\eta\) exists, however, then
\(H_0\cap H_1=\varnothing\). Indeed, for
\(\rho\in H_0\cap H_1\) and every \(n\),
\[
  \rho(\eta_n\ne0)+\rho(\eta_n\ne1)=1,
\]
whereas Definition~\ref{def:uniform}, applied with
\(\varepsilon=1/2\), would eventually make both terms strictly smaller than
\(1/2\).
\end{remark}

For \(i\in\{0,1\}\), define the worst-case error
\[
  e_{i,n}(\eta):=
  \begin{cases}
    \displaystyle\sup_{\rho\in H_i}\rho(\eta_n\ne i),
      & H_i\ne\varnothing,\\[2mm]
    0, & H_i=\varnothing,
  \end{cases}
\]
and set
\[
  e_n(\eta):=\max_{i\in\{0,1\}}e_{i,n}(\eta).
\]
Definition~\ref{def:uniform} is equivalent to \(e_n(\eta)\to0\).
Indeed, suppose that \(\eta\) is uniformly consistent, and fix
\(\varepsilon>0\). Applying Definition~\ref{def:uniform} with
\(\varepsilon/2\), we obtain, for all sufficiently large \(n\),
\[
  e_{i,n}(\eta)\le \varepsilon/2<\varepsilon
  \qquad(i\in\{0,1\}),
\]
where the empty-class error is zero. Hence \(e_n(\eta)\to0\). Conversely,
\(e_n(\eta)\to0\) implies uniform consistency because
\(\rho(\eta_n\ne i)\le e_n(\eta)\) for every \(\rho\in H_i\).

\begin{definition}[Asymmetric consistency]\label{def:asymmetric}
For \(\alpha\in(0,1)\), a test
\(\psi^\alpha=(\psi_n^\alpha)_{n\ge1}\) is
\emph{asymmetrically consistent at level \(\alpha\)} if
\begin{align}
  \rho(\psi_n^\alpha=1)
  &\le \alpha
  &&(\rho\in H_0,\ n\in\N), \label{eq:level}\\
  \rho\bigl(\psi_n^\alpha=1\text{ eventually}\bigr)
  &=1
  &&(\rho\in H_1). \label{eq:power}
\end{align}
An \emph{asymmetrically consistent family} is a collection
\((\psi^\alpha)_{\alpha\in(0,1)}\) containing one such test for each level;
no compatibility across levels is required. The pair \((H_0,H_1)\) is
\emph{asymmetrically testable} if such a family exists.
\end{definition}

\begin{definition}[Asymptotic consistency]\label{def:asymptotic}
A test \(\theta\) is \emph{asymptotically consistent} if
\[
  \rho\bigl(\theta_n=i\text{ eventually}\bigr)=1
  \qquad
  (i\in\{0,1\},\ \rho\in H_i).
\]
\end{definition}

\section{Uniform-to-asymmetric conversion}

\begin{theorem}\label{thm:conversion}
Let \(\varphi=(\varphi_n)_{n\ge1}\) be a uniformly consistent deterministic
binary test for \(H_0\) against \(H_1\). Fix \(\alpha\in(0,1)\), and let
\(b=(b_k)_{k\ge1}\) satisfy
\[
  0<b_k\le\alpha
  \quad(k\in\N),
  \qquad
  \sum_{k=1}^{\infty}b_k<\infty.
\]
Then there exist a deterministic test
\(\psi=(\psi_n)_{n\ge1}\) and strictly increasing sample sizes
\[
  m_1<m_2<\cdots
\]
such that
\begin{equation}
  \rho(\psi_n=1)\le\alpha
  \qquad(\rho\in H_0,\ n\in\N),
  \label{eq:thm-level}
\end{equation}
and, for every \(i\in\{0,1\}\) with \(H_i\ne\varnothing\) and every
\(K\in\N\),
\begin{equation}
  \sup_{\rho\in H_i}
  \rho\left(\exists n\ge m_K:\ \psi_n\ne i\right)
  \le \sum_{k=K}^{\infty}b_k.
  \label{eq:general-tail}
\end{equation}
Consequently, \(\psi\) is uniformly and asymptotically consistent and is
asymmetrically consistent at level \(\alpha\).
\end{theorem}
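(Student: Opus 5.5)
We follow the update-and-hold construction announced in the introduction. Since \(\varphi\) is uniformly consistent, \(e_n(\varphi)\to0\) by the equivalence established after Remark~\ref{rem:overlap}. Because each \(b_k>0\), we can choose the update times recursively. Let \(m_1\) be any integer with \(e_n(\varphi)\le b_1\) for all \(n\ge m_1\). Given \(m_{k-1}\), let \(m_k>m_{k-1}\) be any integer with \(e_n(\varphi)\le b_k\) for all \(n\ge m_k\). Then \(m_1<m_2<\cdots\) and \(e_{m_k}(\varphi)\le b_k\) for every \(k\). Only the values at \(m_k\) will be used, so it would suffice to pick each \(m_k\) with \(e_{m_k}(\varphi)\le b_k\). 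Now define
\[
  \psi_n:=0 \quad (n<m_1),
  \qquad
  \psi_n:=\varphi_{m_k} \quad (m_k\le n<m_{k+1},\ k\in\N).
\]
Each \(\psi_n\) is \(\F_n\)-measurable: it is either constant, or it equals \(\varphi_{m_k}\), which is \(\F_{m_k}\subseteq\F_n\)-measurable. So \(\psi\) is an adapted deterministic binary test.

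\textbf{Level.} For \(\rho\in H_0\) and \(n<m_1\) we have \(\rho(\psi_n=1)=0\). For \(m_k\le n<m_{k+1}\),
\[
  \rho(\psi_n=1)=\rho(\varphi_{m_k}=1)\le e_{0,m_k}(\varphi)\le b_k\le\alpha .
\]
This gives \eqref{eq:thm-level}.

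\textbf{Tail.} Fix \(i\) with \(H_i\neq\varnothing\), and fix \(\rho\in H_i\). For \(n\ge m_K\), the value \(\psi_n\) is one of the \(\varphi_{m_k}\) with \(k\ge K\). Hence
\[
  \{\exists n\ge m_K:\psi_n\ne i\}=\bigcup_{k\ge K}\{\varphi_{m_k}\ne i\}.
\]
This is a countable union of \(\F_\infty\)-measurable sets. The union bound gives probability at most \(\sum_{k\ge K} e_{i,m_k}(\varphi)\le\sum_{k\ge K}b_k\). Taking the supremum over \(\rho\in H_i\) yields \eqref{eq:general-tail}.

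\textbf{Consequences.} \emph{Uniform consistency.} For \(n\ge m_K\), each \(\rho\in H_i\) satisfies
\[
  \rho(\psi_n\ne i)\le\sum_{k\ge K}b_k .
\]
This tail sum tends to \(0\) as \(K\to\infty\) because \(\sum_k b_k<\infty\). Hence \(e_n(\psi)\to0\). The case of an empty \(H_i\) is trivial.

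\emph{Asymptotic consistency.} The event \(\{\psi_n=i\text{ eventually}\}^c\) is contained in \(\{\exists n\ge m_K:\psi_n\ne i\}\) for every \(K\). So its probability under \(\rho\in H_i\) is at most \(\sum_{k\ge K}b_k\to0\), i.e.\ it equals \(0\). This is a Borel--Cantelli-type argument.

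\emph{Asymmetric consistency.} Condition \eqref{eq:power} is the case \(i=1\) of asymptotic consistency, and \eqref{eq:level} is \eqref{eq:thm-level}.

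\textbf{Main obstacle.} The argument is essentially routine. The points needing care are that the suprema \(e_{i,n}\) are defined (with the empty-class convention) so the recursive choice of \(m_k\) is possible, and that \(\psi\) stays adapted because the held value \(\varphi_{m_k}\) is \(\F_{m_k}\)-measurable. The key conceptual step is initializing \(\psi\) at \(0\) before \(m_1\). This is what secures the Type~I bound at every sample size, including small \(n\), where \(\varphi\) itself gives no control. Holding decisions rather than using every \(\varphi_n\) is what makes the union bound summable.
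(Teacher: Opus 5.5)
Your proposal is correct and follows the paper's proof essentially step for step: the same update-and-hold test initialized at \(0\), the same level bound, the same tail identity with a union bound, and the same derivation of uniform and asymptotic consistency. The differences are cosmetic and do not affect the argument. You choose \(m_k\) so that \(e_n(\varphi)\le b_k\) for all \(n\ge m_k\), rather than taking the minimal \(n>m_{k-1}\) with \(e_n\le b_k\), and you bound the infinitely-often event by containment in each tail event rather than by continuity from above.
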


\begin{proof}
Fix \(\alpha\) and \(b=(b_k)_{k\ge1}\), and write
\[
  e_n:=e_n(\varphi).
\]
Uniform consistency of \(\varphi\) gives \(e_n\to0\). We select a subsequence
on which the worst-case errors are summable and hold each selected decision
until the next update.

For each \(k\in\N\), because \(b_k>0\) and \(e_n\to0\), there is
an \(N_k\in\N\) such that
\[
  e_n\le b_k
  \qquad(n\ge N_k).
\]
Set \(m_0:=0\) and define recursively
\begin{equation}
  m_k:=\min\{n>m_{k-1}:e_n\le b_k\},
  \qquad k\in\N.
  \label{eq:selected-times}
\end{equation}
Indeed, any \(n>\max\{m_{k-1},N_k\}\) belongs to the set in
\eqref{eq:selected-times}, so that set is nonempty. The quantities \(e_n\) and \(b_k\)
are deterministic, so the update times \(m_k\) are deterministic as well.
Hence \((m_k)_{k\ge1}\) is strictly increasing, tends to infinity, and satisfies
\begin{equation}
  e_{m_k}\le b_k
  \qquad(k\in\N).
  \label{eq:selected-errors}
\end{equation}

For \(k\in\N\), let
\[
  I_k:=\{n\in\N:m_k\le n<m_{k+1}\},
\]
and define
\[
  \psi_n:=
  \begin{cases}
    0, & n<m_1,\\[1mm]
    \varphi_{m_k}, & n\in I_k.
  \end{cases}
\]
The blocks \((I_k)\) partition \(\{n\ge m_1\}\). The initial
constant-zero decisions enforce the Type~I constraint without affecting
eventual correctness. For \(n\in I_k\),
\(\F_{m_k}\subseteq\F_n\), so \(\psi_n\) is \(\F_n\)-measurable;
the case \(n<m_1\) is immediate. Thus \(\psi\) is a deterministic
test.

For \(\rho\in H_0\), the Type~I error is zero when \(n<m_1\). If
\(n\in I_k\), then
\[
  \rho(\psi_n=1)
  =\rho(\varphi_{m_k}=1)
  \le e_{0,m_k}(\varphi)
  \le e_{m_k}
  \le b_k
  \le\alpha.
\]
Thus \eqref{eq:thm-level}, and therefore the level condition
\eqref{eq:level}, holds at every sample size.

For \(i\in\{0,1\}\) and \(K\in\N\), let
\[
  A_{i,K}:=
  \left\{\exists n\ge m_K:\ \psi_n\ne i\right\}.
\]
This event belongs to \(\F_\infty\), being a countable union of measurable
error events. Since every \(n\ge m_K\) lies in a unique block \(I_k\) with
\(k\ge K\),
\begin{equation}
  A_{i,K}
  =\bigcup_{k=K}^{\infty}\{\varphi_{m_k}\ne i\}.
  \label{eq:tail-identity}
\end{equation}
For \(\rho\in H_i\), countable subadditivity and
\eqref{eq:selected-errors} give
\begin{align*}
  \rho(A_{i,K})
  &\le\sum_{k=K}^{\infty}\rho(\varphi_{m_k}\ne i)\\
  &\le\sum_{k=K}^{\infty}b_k.
\end{align*}
Taking the supremum over a nonempty \(H_i\) proves
\eqref{eq:general-tail}.

Let \(B_K:=\sum_{k=K}^{\infty}b_k\). For \(n\ge m_K\),
\(\{\psi_n\ne i\}\subseteq A_{i,K}\), so
\[
  e_{i,n}(\psi)\le B_K
\]
for nonempty \(H_i\), while the empty-class error is zero. Given
\(\varepsilon>0\), choose \(K\) such that \(B_K<\varepsilon\). Then, for
every \(n\ge m_K\) and \(i\in\{0,1\}\),
\(e_{i,n}(\psi)<\varepsilon\). Hence \(e_n(\psi)\to0\), so \(\psi\) is
uniformly consistent.

Fix \(i\in\{0,1\}\) and \(\rho\in H_i\). The events \(A_{i,K}\)
decrease with \(K\). Since \(m_K\to\infty\), the tails
\(\{n\ge m_K\}\) are cofinal among the integer tails, so
\[
  \bigcap_{K=1}^{\infty}A_{i,K}
  =
  \{\psi_n\ne i\text{ infinitely often}\}.
\]
Since \(\rho(A_{i,K})\le B_K\to0\), continuity from above gives
\[
  \rho(\psi_n\ne i\text{ infinitely often})=0.
\]
Hence \(\psi_n=i\) eventually almost surely. Thus \(\psi\) is asymptotically
consistent and, together with \eqref{eq:thm-level}, asymmetrically consistent
at level \(\alpha\).

\end{proof}

\begin{corollary}[Uniform testability implies asymmetric testability]
\label{cor:uniform-asymmetric}
If \((H_0,H_1)\) is uniformly testable, then, for every
\(\alpha\in(0,1)\), there exist a deterministic test \(\psi^\alpha\) and
strictly increasing sample sizes \((m_k^\alpha)_{k\ge1}\) such that
\eqref{eq:level} holds and
\begin{equation}
  \sup_{\rho\in H_i}
  \rho\left(\exists n\ge m_K^\alpha:\ \psi_n^\alpha\ne i\right)
  \le \alpha\,2^{-K}
  \label{eq:tail}
\end{equation}
for every \(i\in\{0,1\}\) with \(H_i\ne\varnothing\) and every
\(K\in\N\). Consequently, \((H_0,H_1)\) is asymmetrically testable, and each
\(\psi^\alpha\) is also uniformly and asymptotically consistent.
\end{corollary}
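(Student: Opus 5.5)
The corollary should follow from Theorem~\ref{thm:conversion} once we choose a budget sequence whose tail sums equal \(\alpha 2^{-K}\). Since \((H_0,H_1)\) is uniformly testable, Definition~\ref{def:uniform} gives a uniformly consistent deterministic binary test \(\varphi\). Fix \(\alpha\in(0,1)\) and set
\[
  b_k:=\alpha\,2^{-(k+1)}\qquad(k\in\N).
\]
Then \(0<b_k\le\alpha/4\le\alpha\), and the series is summable with tail sums
\[
  \sum_{k=K}^{\infty}b_k=\alpha\sum_{k=K}^{\infty}2^{-(k+1)}=\alpha\,2^{-K}.
\]
So \(b\) satisfies the hypotheses of Theorem~\ref{thm:conversion}.

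Applying the theorem to \(\varphi\), \(\alpha\) and \(b\) produces a deterministic test \(\psi^\alpha:=\psi\) and strictly increasing sample sizes \(m_k^\alpha:=m_k\). The level bound \eqref{eq:thm-level} is exactly \eqref{eq:level}. The tail bound \eqref{eq:general-tail}, with the tail sum computed above, is exactly \eqref{eq:tail} for every \(i\) with \(H_i\ne\varnothing\) and every \(K\in\N\).

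The concluding sentence of Theorem~\ref{thm:conversion} gives three further properties of \(\psi^\alpha\): it is uniformly consistent, it is asymptotically consistent, and it is asymmetrically consistent at level \(\alpha\). The last of these includes the power condition \eqref{eq:power}. Doing this separately for each \(\alpha\in(0,1\)) yields a family \((\psi^\alpha)_{\alpha\in(0,1)}\). Definition~\ref{def:asymmetric} requires no compatibility across levels, so this family witnesses asymmetric testability.

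No step is a genuine obstacle. The only points to check are the choice of the offset \(k+1\) in the exponent, so that the tail sum is \(\alpha 2^{-K}\) rather than \(\alpha 2^{1-K}\), and the constraint \(b_k\le\alpha\), which holds trivially.
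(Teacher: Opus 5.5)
Your proposal is correct and matches the paper's proof: you use the same budget \(b_k=\alpha\,2^{-(k+1)}\) with tail sums \(\alpha\,2^{-K}\), followed by a direct application of Theorem~\ref{thm:conversion} separately at each level \(\alpha\). Your checks that \(b_k\le\alpha/4\) and that no cross-level compatibility is required are accurate; they only make explicit what the paper leaves implicit.
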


\begin{proof}
Let \(\varphi\) be a uniformly consistent test. For each
\(\alpha\in(0,1)\), set
\[
  b_k^\alpha:=\alpha\,2^{-(k+1)},
  \qquad
  b^\alpha:=(b_k^\alpha)_{k\ge1}.
\]
Then \(\sum_{k=K}^{\infty}b_k^\alpha=\alpha\,2^{-K}\). Apply
Theorem~\ref{thm:conversion}, and denote the resulting test and
update times by \(\psi^\alpha\) and \((m_k^\alpha)_{k\ge1}\), respectively.
\end{proof}

\begin{remark}[Quantitative but existential tail control]
\label{rem:tail-strength}
The bound \eqref{eq:general-tail} is stronger than almost-sure eventual
correctness. Its right-hand side is explicit, but the update times are
generally existential: selecting \(m_k\) requires access to the worst-case
errors \(e_n(\varphi)\), which need not be computable from a finite
description of the testing problem.
\end{remark}

\section{Application to Ryabko's Conjecture}

Let \(A\) be a nonempty finite set, write
\(A^\ast:=\bigcup_{n=1}^{\infty}A^n\), let \(\Omega=A^\N\) carry the product
\(\sigma\)-field, and define
\[
  X_j(x_1,x_2,\ldots):=x_j,
  \qquad
  \F_n:=\sigma(X_1,\ldots,X_n),
  \qquad
  \F_\infty:=\sigma(X_1,X_2,\ldots).
\]
Let \(T(x_1,x_2,\ldots)=(x_2,x_3,\ldots)\), and let \(\E\) denote the
class of stationary ergodic probability measures \(\rho\) on
\((A^\N,\F_\infty)\), that is, those satisfying
\(\rho\circ T^{-1}=\rho\) and
\[
  \rho(B\mathbin{\triangle}T^{-1}B)=0
  \quad\Longrightarrow\quad
  \rho(B)\in\{0,1\}
  \qquad(B\in\F_\infty).
\]

For the prefix map
\[
  \pi_n:A^\N\to A^n,
  \qquad
  \pi_n(x_1,x_2,\ldots)=(x_1,\ldots,x_n),
\]
the fibers of \(\pi_n\) are the atoms of \(\F_n\). Hence every adapted
binary decision has the unique form \(\eta_n=f_n\circ\pi_n\) for a function
\(f_n:A^n\to\{0,1\}\), and conversely every such function defines an
adapted decision. Fix \(a_\ast\in A\). In particular, for each
\(\alpha\in(0,1)\), the test \(\psi^\alpha\) considered below corresponds to
the finite-prefix test \(g_n^\alpha:A^n\to\{0,1\}\) defined by
\[
  g_n^\alpha(x_1^n)
  :=\psi_n^\alpha(x_1,\ldots,x_n,a_\ast,a_\ast,\ldots).
\]
Because \(\psi_n^\alpha\) is \(\F_n\)-measurable, its value is constant on
each fiber of \(\pi_n\). Hence the definition is independent of the chosen
continuation beyond \(x_n\), and
\(\psi_n^\alpha=g_n^\alpha\circ\pi_n\). Thus defining
\(g^\alpha|_{A^n}:=g_n^\alpha\) for \(n\ge1\) gives a single
\(g^\alpha:A^\ast\to\{0,1\}\) in Ryabko's sense. Accordingly, the tests and error
conventions above are exactly Ryabko's finite-prefix tests
\cite[pp.~51--54]{Ryabko2019}.

Ryabko's uniform, asymmetric, and asymptotic criteria coincide with
Definitions~\ref{def:uniform}, \ref{def:asymmetric}, and \ref{def:asymptotic}, respectively
\cite[Definitions~5.1--5.3, pp.~53--54]{Ryabko2019}.\footnote{The prose immediately preceding Definition~5.2 says that the
Type~II error probability ``goes to 0,'' whereas item~(ii) of the formal
definition requires Type~II errors to occur only finitely many times almost
surely. We follow the formal Definition~5.2. Under the weaker fixed-time
formulation, uniform consistency already yields the asymmetric condition by
outputting \(0\) before a sufficiently late deterministic time and then using
the original test; the construction above addresses the stronger formal
almost-sure requirement.}
\footnote{Definition~5.3
is headed ``asymptotic consistency'' but contains the phrase ``uniformly
consistent.'' The displayed formula and surrounding discussion indicate that
this appears to be a typographical slip and that ``asymptotically
consistent'' is intended.} For binary decisions, convergence
to \(i\) is equivalent to eventual equality with \(i\).

This construction is admissible under Ryabko's definitions: a test is a
function of the observed finite prefix, so it may hold a decision between
updates, while the asymmetric criterion asks separately for existence at each
\(\alpha\) and imposes neither cross-level compatibility nor computability.
Our update times are deterministic, not data-dependent stopping times, the
test is defined at every \(n\), and the initial zeros are valid prefix
decisions. Thus a possibly noncomputable update schedule satisfies every
formal requirement of Ryabko's asymmetric consistency definition.

\begin{corollary}[Uniform-to-asymmetric implication in Ryabko's
Conjecture~5.2]
\label{cor:ryabko}
Let \(H_0,H_1\subseteq\E\). If \((H_0,H_1)\) is uniformly testable, then it
is asymmetrically testable. More precisely, for every \(\alpha\in(0,1)\)
there is a test \(\psi^\alpha\) that is asymmetrically consistent at level
\(\alpha\), uniformly consistent, and asymptotically consistent, and that
satisfies the uniform tail-error bound \eqref{eq:tail}.
\end{corollary}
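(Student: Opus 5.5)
The plan is to obtain Corollary~\ref{cor:ryabko} as a specialization of Corollary~\ref{cor:uniform-asymmetric} to the coordinate setting of this section. The only real work is to check that the abstract framework of Section~2 coincides with Ryabko's finite-prefix setting. Take \(\Omega=A^\N\), the filtration \(\F_n=\sigma(X_1,\ldots,X_n)\), \(\F_\infty=\sigma(X_1,X_2,\ldots)\), and \(H_0,H_1\subseteq\E\subseteq\mathcal P(A^\N,\F_\infty)\). By the discussion preceding the corollary, Ryabko's tests \(g:A^\ast\to\{0,1\}\) correspond exactly to adapted deterministic binary tests via \(\eta_n=g|_{A^n}\circ\pi_n\). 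The correspondence is bijective because the fibers of \(\pi_n\) are the atoms of \(\F_n\). Also, Ryabko's uniform, asymmetric, and asymptotic criteria coincide with Definitions~\ref{def:uniform}, \ref{def:asymmetric}, and~\ref{def:asymptotic}.

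The steps, in order, are as follows.

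First, assume \((H_0,H_1)\) is uniformly testable in Ryabko's sense. Translate the witnessing prefix test \(g\) into an adapted test \(\varphi_n:=g|_{A^n}\circ\pi_n\). Its error probabilities \(\rho(\varphi_n\ne i)\) are the same quantities as Ryabko's, so \(\varphi\) is uniformly consistent in the sense of Definition~\ref{def:uniform}.

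Second, apply Corollary~\ref{cor:uniform-asymmetric}. For each \(\alpha\in(0,1)\) it gives a deterministic adapted test \(\psi^\alpha\) and update times \((m_k^\alpha)\) satisfying \eqref{eq:level} and \eqref{eq:tail}. The same corollary (via Theorem~\ref{thm:conversion}) shows that \(\psi^\alpha\) is uniformly and asymptotically consistent, and satisfies \eqref{eq:power} under every \(\rho\in H_1\).

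Third, translate back. Define \(g^\alpha\) on \(A^n\) by \(g_n^\alpha\) as above; this is well defined by \(\F_n\)-measurability of \(\psi^\alpha_n\). Because \(\psi_n^\alpha=g_n^\alpha\circ\pi_n\), the events in \eqref{eq:level}, \eqref{eq:power}, and \eqref{eq:tail} are literally the events in Ryabko's definitions. Asymmetric testability then follows by collecting one test per level, since no compatibility across levels is required.

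The main obstacle is not analytic but definitional. One must confirm that Ryabko's formal Definition~5.2 asks for exactly \eqref{eq:level} at every \(n\) together with almost-sure eventual correctness under \(H_1\), which is the stronger of the two readings discussed in the footnote. One must also confirm that an update-and-hold test with possibly noncomputable deterministic update times counts as an admissible test. I would handle this by pointing to the fact that a test is just a function on \(A^\ast\), and to the paragraph above on admissibility. Measurability of events such as \(\{\psi_n=1\text{ eventually}\}\) in \(\F_\infty\) is immediate, since each \(\{\psi_n=1\}\) is a cylinder set. Stationarity and ergodicity of \(H_0,H_1\) play no role beyond guaranteeing \(H_i\subseteq\mathcal P(A^\N,\F_\infty)\).
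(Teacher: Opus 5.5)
Your proposal is correct and takes essentially the same route as the paper. The paper's proof simply applies Corollary~\ref{cor:uniform-asymmetric} to the coordinate filtration, and it relies on the preceding discussion for the identification of adapted tests with Ryabko's finite-prefix tests and of the three consistency criteria, just as you do.
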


\begin{proof}
Apply Corollary~\ref{cor:uniform-asymmetric} to the coordinate filtration.
\end{proof}


\begin{thebibliography}{99}

\bibitem{BoekenEtAl2026}
Philip Boeken, Eduardo Skapinakis, Konstantin Genin, and Joris M. Mooij,
``Topological criteria for hypothesis testing with finite-precision
measurements,''
arXiv:2601.13946v2 [math.ST], revised May 5, 2026.
\href{https://arxiv.org/abs/2601.13946v2}{arXiv:2601.13946v2}.

\bibitem{Ermakov2017}
Mikhail Ermakov,
``On consistent hypothesis testing,''
\emph{Journal of Mathematical Sciences} \textbf{225} (2017), 751--769;
author preprint, arXiv:1403.6296v6 [math.ST], April 20, 2015.
\href{https://arxiv.org/abs/1403.6296v6}{arXiv:1403.6296v6};
\href{https://doi.org/10.1007/s10958-017-3491-4}
     {doi:10.1007/s10958-017-3491-4}.

\bibitem{Nobel2006}
Andrew B. Nobel,
``Hypothesis testing for families of ergodic processes,''
\emph{Bernoulli} \textbf{12} (2006), no.~2, 251--269.
\href{https://doi.org/10.3150/bj/1145993974}
     {doi:10.3150/bj/1145993974}.

\bibitem{Pfanzagl1968}
Johann Pfanzagl,
``On the existence of consistent estimates and tests,''
\emph{Z. Wahrscheinlichkeitstheorie verw. Gebiete}
\textbf{10} (1968), no.~1, 43--62.
\href{https://doi.org/10.1007/BF00572921}
     {doi:10.1007/BF00572921}.

\bibitem{Ryabko2019}
Daniil Ryabko,
\emph{Asymptotic Nonparametric Statistical Analysis of Stationary Time
Series}, SpringerBriefs in Computer Science, Springer, Cham, 2019.
\href{https://doi.org/10.1007/978-3-030-12564-6}
     {doi:10.1007/978-3-030-12564-6}.
Author manuscript (used for the cited pagination):
\href{https://arxiv.org/abs/1904.00173v1}{arXiv:1904.00173v1}.


\end{thebibliography}
\end{document}